\theoremstyle{plain} 
\newtheorem{lemma}[equation]{Lemma}
\newtheorem{theorem}[equation]{Theorem}
 \newtheorem{conjecture}[equation]{Conjecture}
\newtheorem{smallBallConjecture}[equation]{Small Ball Conjecture}
\newtheorem{signedSmallBallConjecture}[equation]{Signed Small Ball Conjecture}
\newtheorem{talagrand}[equation]{Talagrand's Theorem}
\newtheorem{bg}[equation]{The  Simplest Instance of the Beck Gain}
\newtheorem{lp}[equation]{Littlewood-Paley Inequalities}
\theoremstyle{definition}
\newtheorem{definition}[equation]{Definition}
\theoremstyle{remark}
\numberwithin{equation}{section}
\def\norm#1.#2.{\lVert#1\rVert_{#2}}
\def\Norm#1.#2.{\bigl\lVert#1\bigr\rVert_{#2}}
\def\NOrm#1.#2.{\Bigl\lVert#1\Bigr\rVert_{#2}}
\def\NORm#1.#2.{\biggl\lVert#1\biggr\rVert_{#2}}
\def\NORM#1.#2.{\Biggl\lVert#1\Biggr\rVert_{#2}}
\def\ip#1,#2,{\langle #1,#2\rangle}
\def\Ip#1,#2,{\bigl\langle#1,#2\bigr\rangle}
\def\IP#1,#2,{\Bigl\langle#1,#2\Bigr\rangle}
\def\mid{\,:\,}
\def\abs#1{\lvert#1\rvert}
\def\Abs#1{\bigl\lvert#1\bigr\rvert}
\def\ABs#1{\biggl\lvert#1\biggr\rvert}
\def\XXint#1#2#3{{\setbox0=\hbox{$#1{#2#3}{\int}$}
     \vcenter{\hbox{$#2#3$}}\kern-.5\wd0}}
\begin{document}
\title[Signed Small Ball Inequality]
{On the Signed Small Ball Inequality}
\author[D.~Bilyk]{Dmitriy Bilyk}
\address{School of Mathematics \\ Georgia Institute of Technology \\ Atlanta GA 30030}

\email{bilyk@math.gatech.edu}

\author[M.\thinspace T.~Lacey]{Michael T. Lacey}
\address{School of Mathematics \\ Georgia Institute of Technology \\ Atlanta GA 30030}

\email{lacey@math.gatech.edu}

\author[A. Vagharshakyan]{Armen Vagharshakyan}
\address{School of Mathematics \\ Georgia Institute of Technology \\ Atlanta GA 30030}

\email{armenv@math.gatech.edu}

\begin{abstract}
Let $ h_R$ denote an $ L ^{\infty }$ normalized Haar function
adapted to a dyadic rectangle $ R\subset [0,1] ^{d}$. We show that
for all choices of coefficients $ \alpha (R)\in \{\pm1\}$, we have the
following lower bound on the $ L ^{\infty }$ norms of the sums of
such functions, where the sum is over rectangles of a fixed volume.
\begin{equation*}
 n ^{\eta (d)} \lesssim \NOrm \sum _{\abs{ R}= 2 ^{-n}} \alpha (R) h_R (x).
 L ^{\infty} ([0,1] ^{d}) .
\,, \quad \textup{for all }\quad \eta (d) < \frac {d-1} 2 + \frac 1
{8d}\,,
\end{equation*}
where the implied constant is independent of $ n\ge 1$. The
inequality above (without  restriction on the coefficients) arises
in connection to several areas, such as Probabilities,
Approximation, and Discrepancy. With $\eta (d)= (d-1)/2$, 
the inequality above follows from orthogonality, 
 while it is conjectured that the inequality holds  with
$\eta (d) ={d/2}$. This is known and proved in \cite{MR95k:60049} in
the case of $ d=2$, and
 recent papers of the of the authors \cite{bl}, \cite{blv} prove
 that in higher dimensions one can
take $ \eta (d)> (d-1)/2$, without specifying a particular  value of $ \eta
$. The restriction $\alpha_R \in \{\pm1\}$ allows us to
significantly simplify our prior arguments and to find an explicit value of
$\eta(d)$.
\end{abstract}

\maketitle

\section{The Small Ball Conjectures} 

 In one dimension, the class of dyadic intervals are $\mathcal D {} \coloneqq {}\{ [j2^k,(j+1)2^k)\mid j,k\in \mathbb Z\} $.
 Each dyadic interval has a left and right half, which are also dyadic.  Define the
 $L^\infty$-normalized Haar functions
 \begin{equation*}
h_I \coloneqq -\mathbf 1 _{I_{\textup{left}}}+ \mathbf 1
_{I_{\textup{right}}}.
\end{equation*}

 In $d$ dimensions, a \emph{dyadic rectangle} is a product of dyadic intervals, i.e. an
element of
 $\mathcal D^d $.   We define a Haar function associated to $R $  to  be the product of the Haar functions associated
 with each side of $R $, namely
 \begin{equation*}
 h_{R_1\times\cdots\times R_d }(x_1,\ldots,x_d) {} \coloneqq {}\prod _{j=1}^d h _{R_j}(x_j).
 \end{equation*}

 We will consider a local problem and concentrate on rectangles with  fixed volume.
 This is the `hyperbolic'
 assumption, that pervades the subject.
 Our concern is the following Theorem and Conjecture concerning a
 \emph{lower bound} on the $ L ^{\infty }$ norm of sums of hyperbolic Haar functions:

 \begin{smallBallConjecture} \label{small} For dimension $ d\ge 3$ we have the inequality
 \begin{equation}\label{e.Talagrand}
2 ^{-n} \sum _{\abs{ R}= 2 ^{-n} } \abs{ \alpha(R) }
{}\lesssim{} n ^{\frac12(d-2) } \NOrm \sum _{\abs{ R}\ge 2 ^{-n} } \alpha (R) h_R .\infty .
\end{equation}
 \end{smallBallConjecture}

 Average case analysis --- that is passing through $ L ^2 $  --- shows that we always have
\begin{equation*}
2 ^{-n} \sum _{\abs{ R}= 2 ^{-n} } \abs{ \alpha(R) }
{}\lesssim{} n ^{\frac12(d-1) } \NOrm \sum _{\abs{ R}\ge 2 ^{-n} } \alpha (R) h_R .\infty .
\end{equation*}
Namely, the constant on the right is bigger than in the conjecture by a factor
of $ \sqrt n$.
 We refer to this as the `average case estimate,' and refer to improvements over this
 as a `gain over the average case estimate.'

Random choices of coefficients $ \alpha (R)$ show that the Small Ball Conjecture is sharp.
The interest in this conjecture arises from questions in Probability Theory
\cite{MR95k:60049}, Approximation Theory \cite{MR1005898} and the
theory of Irregularities of Distribution \cite{MR903025}.

In dimension $ d=2$, the Conjecture was resolved by \cite{MR95k:60049}.\footnote{This
result should be compared to \cite{MR0319933}, as well as \cite{MR96c:41052}.}

 \begin{talagrand}\label{j.talagrand}
    In dimension $d=2 $, we have
 \begin{equation}  \label{e.talagrand}
2 ^{-n} \sum _{\abs{ R}= 2 ^{-n} } \abs{ \alpha(R) } {}\lesssim{}
\NOrm \sum _{\abs R \ge 2 ^{-n}} \alpha (R) h_R .\infty .
 \end{equation}
 Here, the sum on the right is taken over all rectangles with area \emph{at least } $ 2
 ^{-n}$.
 \end{talagrand}

 In dimensions  $ d\ge 3$, there is partial information in \cite{bl}, \cite{blv}, which builds upon
 the method devised by \cite{MR1032337}.

\begin{theorem}\label{t.bl} In dimension $ d\ge 4$, there is a  $ \zeta(d)>0  $ so that for all
choices of coefficients $ \alpha (R)$ we have
 \begin{equation}  \label{e.bl}
2 ^{-n} \sum _{\abs{ R}= 2 ^{-n} } \abs{ \alpha(R) } {}\lesssim{} n
^{\frac{d-1}{2}- \zeta(d)   } \NOrm \sum _{\abs R \ge 2 ^{-n}}
\alpha (R) h_R .\infty .
 \end{equation}
\end{theorem}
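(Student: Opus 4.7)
The plan is to adapt the Riesz product approach introduced by Beck, pairing $F = \sum_{|R|=2^{-n}} \alpha(R) h_R$ against a carefully designed test function $\Psi$ so that duality $\|F\|_\infty \ge \langle F,\Psi\rangle/\|\Psi\|_1$ yields a lower bound beating the $L^2$-based average. Group the hyperbolic rectangles by shape: for each $\vec r = (r_1,\dots,r_d)$ with $r_i \ge 0$ and $r_1+\cdots+r_d = n$, let $\mathbb{H}_{\vec r}$ be the rectangles with $|R_j| = 2^{-r_j}$; there are $\sim n^{d-1}$ such shape vectors. Write $F_{\vec r} = \sum_{R\in\mathbb{H}_{\vec r}} \alpha(R) h_R$ and define the (generalised) $r$-function $f_{\vec r} = \sum_{R\in\mathbb{H}_{\vec r}} \mathrm{sgn}(\alpha(R))\, h_R$, a $\pm 1$-valued function tuned so that $\langle F_{\vec r}, f_{\vec r}\rangle = 2^{-n} \sum_{R\in \mathbb{H}_{\vec r}} |\alpha(R)|$.

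Next, select a sub-collection of shape vectors $\vec r_1,\dots,\vec r_k$ (with $k \sim n^{1-2\zeta(d)}$) that are ``spread out'' in the simplex of partitions of $n$, namely so that any two $\vec r_i, \vec r_j$ differ in many coordinates, and form the Riesz product
\begin{equation*}
 \Psi \eqdef \prod_{j=1}^k \bigl( 1 + \rho\, f_{\vec r_j} \bigr),
\end{equation*}
with a small parameter $\rho>0$. Expanding $\langle F,\Psi\rangle$, the constant term contributes $0$ (each $F_{\vec r}$ has zero mean), and the linear terms $\rho\sum_j \langle F, f_{\vec r_j}\rangle$ reproduce the target quantity $\rho\, k \cdot 2^{-n}\sum_R |\alpha(R)|$ once the $\vec r_j$'s are chosen to balance the $L^1$ mass of $\alpha$ across shapes. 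The desired bound then follows provided (i) the higher-order cross terms $\rho^m \langle F, f_{\vec r_{j_1}}\cdots f_{\vec r_{j_m}}\rangle$ are negligible, and (ii) $\|\Psi\|_1 \lesssim 1$.

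Both of these rest on a \emph{Beck gain}: for distinct shape vectors $\vec s_1,\dots,\vec s_m$, the Haar expansion of the product $f_{\vec s_1}\cdots f_{\vec s_m}$ is supported on rectangles strictly smaller than $2^{-n}$ in every coordinate where the $\vec s_i$ disagree, producing an estimate of the form
\begin{equation*}
 \Norm f_{\vec s_1}\cdots f_{\vec s_m}.2. \lesssim 2^{-\kappa(m-1)n}
\end{equation*}
for a constant $\kappa>0$ depending on the coincidence pattern of the $\vec s_i$'s. Combined with Cauchy-Schwarz and the $L^2$ bound $\|F\|_2 \lesssim 2^{-n/2}\, n^{(d-1)/2}$, this renders each cross term exponentially smaller than the linear main term once $\rho$ and $k$ are calibrated, and a parallel expansion controls $\|\Psi\|_1$ by using positivity of the partial products together with the same gain.

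The main obstacle is establishing the Beck gain with the right quantitative dependence on the combinatorics of the shape vectors, and then choosing the sub-collection $\{\vec r_j\}$ inside the simplex $\{r_1+\cdots+r_d = n\}$ so that every pair, and more generally every tuple that appears in the expansion of $\Psi$, has enough coordinate mismatches to activate the gain. This is a purely combinatorial selection problem on the simplex, and the hypothesis $d\ge 4$ reflects the need for sufficient ``room'' to extract a spread-out subset of the required size. The final $\zeta(d)$ emerges by optimising $k$, $\rho$, and the gain exponent $\kappa$ obtained in the coincidence analysis.
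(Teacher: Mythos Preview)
Your proposal contains a decisive error in the Beck gain. Each $f_{\vec s_i}$ is a $\pm 1$-valued function, so the product $f_{\vec s_1}\cdots f_{\vec s_m}$ is also $\pm 1$-valued and therefore has $L^2$ norm exactly $1$; the exponential decay $\|f_{\vec s_1}\cdots f_{\vec s_m}\|_2 \lesssim 2^{-\kappa(m-1)n}$ is simply false. What \emph{is} true is that the product, in coordinates where the $\vec s_i$ disagree, expands in Haar functions at the finer of the two scales; this gives orthogonality relations but no decay of the $L^2$ norm of the product itself. The actual Beck gain is only \emph{polynomial} in $n$: for the sum
\[
\Phi \;=\; \sum_{\substack{\vec r\neq\vec s\\ r_1=s_1}} f_{\vec r}\, f_{\vec s}
\]
over pairs with a coincidence in the first coordinate, one has $\|\Phi\|_p \lesssim p^{d-1/2} n^{d-3/2}$ rather than the trivial $n^{d-1}$, i.e.\ a gain of $\sqrt n$. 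Extracting even this much requires real work, and in the unrestricted-coefficient setting of Theorem~\ref{t.bl} one must further control longer chains of coincidences, which is the main technical content of \cites{bl,blv}. Your $L^2$ bound $\|F\|_2\lesssim 2^{-n/2}n^{(d-1)/2}$ is also off by a factor $2^{-n/2}$ under the $L^\infty$ Haar normalization used here.

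Because the gain is only polynomial, the architecture of the Riesz product is different from what you describe. One does not take a product over individually chosen $\mathsf r$-functions $f_{\vec r_j}$; instead one partitions the range of the \emph{first coordinate} into $q$ intervals, sets $F_t=\sum_{\vec r: r_1\in I_t} f_{\vec r}$, and forms $\Psi=\prod_{t=1}^{q}(1+\widetilde\rho F_t)$ with a parameter $\widetilde\rho$ deliberately taken \emph{smaller} than the true $L^2$ normalization $\rho$. The grouping by first coordinate gives a martingale/conditional-independence structure that makes $\mathbb E\Psi=1$ and, together with the polynomial Beck gain and a subgaussian tail for $\rho F_t$, forces $\|\Psi\|_1\lesssim 1$. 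The hypothesis $d\ge 3$ has nothing to do with room for ``spread-out'' shape vectors; rather, in dimension two distinct shape vectors never share a coordinate (so the product is automatically an $\mathsf r$-function and Talagrand's argument closes), whereas for $d\ge 3$ coincidences are unavoidable and controlling them is the whole difficulty.
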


 The Conjecture \ref{small} appears to be quite difficult to resolve in dimensions $ d\ge3$,
 and in this paper we discuss a   more restrictive formulation of the conjecture that still
 appears to be of interest.

\begin{signedSmallBallConjecture}
\label{c.restricted}
We have the inequality   (\ref{e.Talagrand}),
in the case where the coefficients $ \alpha (R)\in \{\pm 1\}$, for $\abs{ R}= 2 ^{-n} $
. Namely, under these assumptions on the
coefficients $ \alpha (R)$ we have the inequality
\begin{equation}\label{e.signed}
 n ^{d/2} \lesssim \NOrm \sum _{\abs{ R}= 2 ^{-n}} \alpha (R) h_R. \infty .
\,.
\end{equation}
\end{signedSmallBallConjecture}

The main result of this note is the next Theorem, in which we give
an explicit gain over the trivial bound in the Signed Small Ball
Conjecture in dimensions $ d\ge 3$.

\begin{theorem}\label{t.d=3} In dimension $ d\ge 3$, for
choices of coefficients $ \alpha (R)\in \{\pm1\}$, we have the inequality
\begin{equation}\label{e.d=3}
 n ^{\eta (d) } \lesssim \NOrm \sum _{\abs{ R}= 2 ^{-n}} \alpha (R) h_R. \infty .
\,, \quad \textup{for all} \quad \eta (d) <  \frac {d-1} 2 + \frac 1
{8d}\,.
\end{equation}
\end{theorem}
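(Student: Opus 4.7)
The plan is to adapt the Riesz-product strategy of Talagrand, as sharpened in higher dimensions by \cite{bl, blv}, exploiting the signed assumption $\alpha(R) \in \{\pm 1\}$ to make the combinatorics explicit. For each composition $\vec s = (s_1,\ldots,s_d)$ of $n$ into nonnegative integers, set $\mathbb{H}_{\vec s} = \{R \mid |R_j| = 2^{-s_j}\}$ and define the $r$-function
\[
f_{\vec s} = \sum_{R \in \mathbb{H}_{\vec s}} \alpha(R)\, h_R.
\]
Because rectangles of a fixed shape tile $[0,1]^d$ and $\alpha(R) \in \{\pm 1\}$, each $f_{\vec s}$ is $\pm 1$ pointwise. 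Writing $F = \sum_{\vec s} f_{\vec s}$ for the sum we must bound below, the goal is to produce a nonnegative test function $\Psi$ of essentially unit $L^1$ mass with $\langle F, \Psi\rangle$ at least $n^{(d-1)/2 + 1/(8d) - o(1)}$.

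I would take $\Psi$ to be a short Riesz product
\[
\Psi = \prod_{\vec s \in \mathbf{A}} (1 + \varepsilon f_{\vec s}),
\]
where $\varepsilon > 0$ is small and $\mathbf{A}$ is a set of shape vectors that are pairwise well-separated on the simplex $s_1 + \cdots + s_d = n$ (say, by a minimum coordinate-wise distance $\rho$). Nonnegativity of $\Psi$ is automatic since $|f_{\vec s}| = 1$. Expanding $\Psi = \sum_{B \subseteq \mathbf{A}} \varepsilon^{|B|} \prod_{\vec s \in B} f_{\vec s}$ and pairing with $F$, the diagonal contribution is $\varepsilon |\mathbf{A}|$, so a useful lower bound on $\|F\|_\infty$ follows provided the off-diagonal pairings and the higher-order terms in $\int \Psi$ are summably small.

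The central obstacle is the \emph{Beck gain}: bounding $\int \prod_{\vec s \in B} f_{\vec s}$ for $|B| \geq 2$. Expanding the product as a sum over rectangle tuples and using that $\int h_{R_1} \cdots h_{R_k} \neq 0$ only when the rectangles form a coincidence configuration — each coordinate axis sees its Haar functions paired up — the integral reduces to a signed count of such configurations. Here the signed hypothesis is decisive: at every forced coincidence, equal pairs of coefficients contribute $\alpha(R_i)^2 = 1$, so the sum is purely combinatorial and every surviving configuration carries a definite sign. This eliminates the cancellation issues that complicated the general-coefficient analysis of \cite{blv} and allows an explicit inductive coincidence count, giving quantitative smallness in terms of the shape separation $\rho$.

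Finally, to extract the explicit exponent $1/(8d)$, one takes $|\mathbf{A}| \asymp (n/\rho)^{d-1}$ and tunes $\varepsilon$ just below the threshold where the $\varepsilon^{|B|}$-weighted error sums overwhelm the diagonal $\varepsilon |\mathbf{A}|$. Increasing $\rho$ from the trivial value $\rho \asymp n$ (a single shape, average case) toward $\rho \asymp n^{1 - \delta}$ enlarges $\mathbf{A}$ but weakens the Beck gain; balancing the two effects, together with the second-order $\varepsilon^2|\mathbf{A}|^2$ cost from the Riesz-product expansion, yields the final gain $n^{1/(8d)}$ over the trivial $n^{(d-1)/2}$ bound. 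The dominant technical difficulty is the inductive Beck-gain step itself, where one must prove that although subsets $B \subseteq \mathbf{A}$ proliferate exponentially in $|\mathbf{A}|$, the admissible coincidence configurations inside each such $B$ grow far more slowly thanks to the shape separation, with a polynomial loss that can be absorbed into the choice of $\rho$.
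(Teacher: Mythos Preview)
Your proposal has a genuine gap at its central step. You assert that the signed hypothesis makes the integrals $\int \prod_{\vec s\in B} f_{\vec s}$ ``purely combinatorial'' because ``at every forced coincidence, equal pairs of coefficients contribute $\alpha(R_i)^2=1$.'' This is false in dimension $d\ge 3$: the nonvanishing of $\int\prod_i h_{R_i}$ does \emph{not} force the rectangles to match in pairs. In each coordinate it only forces the \emph{smallest} side length to occur with even multiplicity; the remaining sides need merely be nested. For instance, with $d=3$, $n=6$ and the four distinct shapes $(3,2,1)$, $(3,1,2)$, $(1,3,2)$, $(2,3,1)$, one readily produces rectangles $R_1,\dotsc,R_4$ (no two equal) with $\int h_{R_1}h_{R_2}h_{R_3}h_{R_4}\neq 0$, and the surviving coefficient $\alpha(R_1)\alpha(R_2)\alpha(R_3)\alpha(R_4)$ is an unconstrained $\pm1$. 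So the signed assumption buys nothing for $\lvert B\rvert\ge 3$, and you are back to the full long-coincidence analysis of \cite{blv} that you were hoping to avoid. (If instead you make $\mathbf A$ so separated that no two shapes share any coordinate, all higher products do vanish, but then $\lvert\mathbf A\rvert\le n+1$ and the diagonal $\varepsilon\lvert\mathbf A\rvert$ cannot reach $n^{(d-1)/2}$.)

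The paper exploits the signed hypothesis in a different and much more limited way, and this is precisely what yields the explicit exponent. Rather than a product over individual $r$-functions, it groups them into $q\simeq n^{\varepsilon}$ blocks $F_t=\sum_{\vec r\in\mathbb A_t} f_{\vec r}$ according to disjoint ranges of the \emph{first} coordinate, and takes $\Psi=\prod_t(1+\widetilde\rho F_t)$. Positivity is \emph{not} automatic here; it is recovered only in probability via a subgaussian bound on $\rho F_t$, itself a corollary of the Beck Gain. The signed assumption enters exactly once, through the pointwise identity $f_{\vec r}^2\equiv 1$: writing $\langle F_j,\Psi\rangle=\widetilde\rho\,\langle F_j^{\,2},\Psi_{\neq j}\rangle$, the diagonal of $F_j^{\,2}$ is the \emph{constant} $\sharp\mathbb A_j$, and among the off-diagonal terms only those with $r_1=s_1$ survive against $\Psi_{\neq j}$ (all other products have pairwise distinct first coordinates and integrate to zero). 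Thus the entire error collapses to the single-coincidence sum $\Phi_j$, controlled by one application of the Beck Gain; products of length three or more never enter. Your product over a sparse set $\mathbf A$ trades this collapse for automatic positivity, but that trade leaves you with the hard part unsolved.
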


The interest in the Theorem above is that the amount of the gain is
explicit, and that the method of proof, using essential ingredients
from \cites{blv,bl,MR1032337}, is much simpler than either of
these prior works.

The principal difficulty in three and higher dimensions is that two
dyadic rectangles of the same volume can share a common side length.
Beck \cite{MR1032337} found a specific estimate in this case, an
estimate that is extended in \cites{blv,bl}. The reader is
encouraged to consult \cite{blv} for a more detailed exposition of
the methods and applications. The main simplification in the current
note lies in the equalities \eqref{e.main}, which allow us to avoid
analyzing longer coincidences. The value of $ \eta $ appears to be
the optimal one we can get out of this line of reasoning, imputing
additional interest to the methods of proof used to improve this
estimate.

\section{Notations and Littlewood-Paley Inequality} 
Let $\vec r\in \mathbb N^d$ be a partition of $n$, thus $\vec r=(r_1
,\dotsc,  r_ d)$, where the  $r_j$ are nonnegative integers and
$\abs{ \vec r} \coloneqq \sum _t r_t=n$, which we refer to as the
\emph{length of the vector $ \vec r$}. Denote all such vectors as $
\mathbb H _n$. (`$ \mathbb H $' for `hyperbolic.') For vector $ \vec
r $ let $ \mathcal R _{\vec r} $ be all dyadic rectangles $ R$ such
that for each coordinate $ k$, $ \lvert  R _k\rvert= 2 ^{-r_k} $.

\begin{definition}\label{d.rfunction}
We call a function $f$ an \emph{$\mathsf r$ function  with parameter $ \vec r$ } if
\begin{equation}
\label{e.rfunction} f=\sum_{R\in \mathcal R _{\vec r}}
\varepsilon_R\, h_R\,,\qquad \varepsilon_R\in \{\pm1\}\,.
\end{equation}
A fact used without further comment is that $ f _{\vec r} ^2 \equiv 1$.
\end{definition}

The $\mathsf r$ functions we are interested in are:
\begin{equation}\label{e.fr}
f _{\vec r} \coloneqq \sum _{R\in \mathcal R _{\vec r}}  \alpha  (R) \, h_R
\end{equation}

\bigskip

We recall those Littlewood-Paley inequalities of most interest to
us. Notice that due to the $L^\infty$ normalization some of the
equalities here will look odd to a reader accustomed to the $L^2$
normalization.
\begin{lp}  In one dimension, we have the inequalities
\begin{equation}\label{e.lp}
\NOrm \sum _{I\subset \mathbb R } a_I h_I (\cdot) .p.
\lesssim
\sqrt p \NORm \biggl[ \sum _{I\subset \mathbb R }  { a_I ^2 }
\mathbf 1 _{I} (\cdot )\biggr] ^{1/2} .p.\,,
\qquad 2<p<\infty \,.
\end{equation}
Moreover, these inequalities continue to hold in the case where
the coefficients $ a_I $ take values in a Hilbert space $ \mathcal H$.
\end{lp}

The growth of the constant is essential for us, in particular the
factor $ \sqrt p$ is, up to a constant, the best possible in this
inequality.  See \cites{MR1439553,MR1018577}.  That these
inequalities hold for Hilbert space valued sums is imperative for
applications to higher dimensional sums of Haar functions. The
relevant inequality is as follows.

\begin{theorem}\label{t.LP} We have the inequalities below
for hyperbolic sums of $ \mathsf r$ functions in   dimension $ d\ge 3$.
\begin{equation}\label{e.LP}
\NOrm \sum _{\lvert  \vec r\rvert =n} f _{\vec r} .p.
\lesssim (p n) ^{ (d-1)/2}\,, \qquad 2<p<\infty \,.
\end{equation}
\end{theorem}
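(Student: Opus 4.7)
The plan is to establish \eqref{e.LP} by induction on the dimension $d$, with the base case $d=1$ trivial: the sum reduces to a single one-dimensional $\mathsf r$-function, which is bounded pointwise by $1$ and so has $L^p$ norm at most $(pn)^0 = 1$.

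For the inductive step, fix $d \ge 2$, set $F \coloneqq \sum_{|\vec r|=n} f_{\vec r}$, and organize the expansion by the scale of the first coordinate:
\begin{equation*}
F(x_1,x') = \sum_{r_1=0}^{n}\ \sum_{\abs{I_1}=2^{-r_1}} h_{I_1}(x_1)\, A_{I_1}(x'), \qquad x' = (x_2,\ldots,x_d).
\end{equation*}
For each dyadic $I_1$ of length $2^{-r_1}$, the function $A_{I_1}$ is itself a hyperbolic sum of $\mathsf r$-functions in dimension $d-1$ at level $n-r_1$, with $\pm 1$ coefficients inherited from $\alpha$. First I would fix $x'$ and apply the scalar one-dimensional Littlewood--Paley inequality \eqref{e.lp} in $x_1$, raise to the $p$-th power, and integrate over $x'$ by Fubini. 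Since exactly one $I_1$ of each scale contains a given $x_1$, the Haar square function collapses and we obtain
\begin{equation*}
\norm F.p.^p \lesssim p^{p/2} \int\!\!\int \biggl(\sum_{r_1=0}^{n} \abs{A_{I_1(x_1,r_1)}(x')}^2\biggr)^{p/2} dx' \, dx_1,
\end{equation*}
where $I_1(x_1,r_1)$ denotes the unique dyadic interval of length $2^{-r_1}$ containing $x_1$.

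Next, Minkowski's inequality in $L^{p/2}(dx')$ reduces the inner $L^{p/2}$ norm of the sum to $\sum_{r_1} \norm A_{I_1(x_1,r_1)}.p.^2$. The inductive hypothesis at dimension $d-1$ and level $n-r_1 \le n$ bounds each factor by $(pn)^{(d-2)/2}$ uniformly in $x_1$, and summing over the $n+1$ values of $r_1$ contributes a factor $n+1$. Taking square roots and multiplying by the outer $\sqrt p$ yields $\norm F.p. \lesssim \sqrt p \cdot \sqrt{n+1} \cdot (pn)^{(d-2)/2} \lesssim (pn)^{(d-1)/2}$, closing the induction.

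The only point requiring modest care is verifying that each $A_{I_1}$ genuinely inherits the structure of a hyperbolic sum of $(d-1)$-dimensional $\mathsf r$-functions at the reduced level $n-r_1$, so that the induction applies with an implied constant independent of $I_1$. Beyond this, the argument is standard: the factor $\sqrt p$ extracted at each reduction is the sharp growth rate in the one-dimensional Littlewood--Paley inequality, so iterating $d-1$ times produces the decisive $p^{(d-1)/2}$, while the counting of scales in each coordinate accounts for the $n^{(d-1)/2}$.
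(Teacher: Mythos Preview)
Your argument is correct. The paper does not actually write out a proof of Theorem~\ref{t.LP}; it simply records the one-dimensional Littlewood--Paley inequality \eqref{e.lp} with its sharp $\sqrt p$ constant, notes that it extends to Hilbert-space valued coefficients, and then states the theorem as the ``relevant inequality'' that follows. The intended argument is the standard iteration of \eqref{e.lp} across the $d-1$ free coordinates, using the vector-valued form at each step to pass from the square function in $k$ coordinates to the one in $k+1$ coordinates.

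Your proof is a legitimate repackaging of exactly this idea: rather than invoke the Hilbert-space valued inequality directly, you apply the scalar inequality in $x_1$, then use Minkowski in $L^{p/2}(dx')$ to pull the sum over $r_1$ outside, and feed each $A_{I_1}$ back into the inductive hypothesis. This trades the vector-valued Littlewood--Paley step for Minkowski plus induction; the two are equivalent here and yield the same constant growth $p^{(d-1)/2}$. The verification you flag---that $A_{I_1}$ is a genuine $(d-1)$-dimensional hyperbolic sum of $\mathsf r$-functions at level $n-r_1$ with $\pm1$ coefficients inherited from $\alpha$---is straightforward and the implied constant is indeed uniform in $I_1$, so the induction closes as you describe.
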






\section{Proof of Theorem~\ref{t.d=3}} 

The proof of the Theorem is by duality, namely we construct a
function $ \Psi $ of $ L ^{1}$ norm about one, which is used to
provide a lower bound on the $ L ^{\infty }$ norm of the sum of Haar
functions.

The function $ \Psi $ will take the form of a Riesz product, but in
order to construct it, we need some definitions first. Fix $
0<\kappa <1$, with the interesting choices of $ \kappa $ being close
to zero. Define relevant parameters by
\begin{gather} \label{e.q}
q= \lfloor a n ^{\varepsilon} \rfloor \,,\qquad \varepsilon = \frac
1 {2d}- \kappa \,, \qquad   b =\tfrac 14\,,
\\
\label{e.rho}
\widetilde \rho=a  q ^{b}  n^{- (d-1)/2}\,, \qquad \rho =  {\sqrt q} n ^{-(d-1)/2}.
\end{gather}
Here $ a $ is a small positive constant, we use the notation  $
b=\tfrac 14$  throughout, so as not to obscure  those aspects of the
argument that that dictate these  choices of parameters.
 $ \widetilde \rho $ is  a `false' $ L^2$
 normalization for the sums we consider, while the larger term $ \rho $ is the
 `true' $ L ^{2}$ normalization.
Our `gain over the average case estimate' in the Small Ball Conjecture is $ q ^{b}
 \simeq n ^{\varepsilon /4}=n ^{1/8d -\kappa/4}=n ^{\eta (d)- (d-1)/2}$.

Divide the integers $ \{1,2,\dotsc,n\}$ into $ q$ disjoint increasing  intervals 
of equal length 
$ I_1,\dotsc, I_q$, and let $ \mathbb A _t \coloneqq \{\vec r\in \mathbb H _n
\mid r_1\in I_t\}$.  Let
\begin{equation}
\label{e.G_t}
F_t \coloneqq   \sum _{\vec r\in \mathbb A _t}  f _{\vec r}\,,
\qquad
H \coloneqq \sum _{\vec r\in \mathbb H _n} f_r=\sum _{t=1} ^{q} F_t\,.
\end{equation}
The Riesz product is a `short product.'
\begin{equation*}
\Psi \coloneqq \prod _{t=1} ^{q} (1+  \widetilde  \rho F_t) \,,
\qquad
\Psi _{\neq j} \coloneqq \prod _{\substack{t=1\\ t\neq j }} ^{q} (1+  \widetilde  \rho F_t) \,,
\quad 1\le j \le q\,.
\end{equation*}
Note the subtle way that the false $ L^2$ normalization enters into
the product. It means that the product is, with high probability,
positive.  And of course, for a positive function $ F$, we have $
\mathbb E F=\norm F.1.$, with expectations being typically easier to
estimate. This heuristic is made precise below. Notice also that
 $\mathbb E \Psi = 1$.

We need a final bit of notation.  Set
\begin{equation}\label{e.Phi}
\Phi _{t} \coloneqq  \sum _{\substack{\vec r\neq \vec s\in \mathbb A _t\\ r _{1}= s _{1} } }
f _{\vec r} \cdot f _{\vec s}\,.
\end{equation}
Note that in this sum, there are $ 2d-3$ free parameters among the vectors $ \vec r$ and
 $ \vec s$.  That is, the pair of vectors $ (\vec r,\vec s)$ are completely specified by
 there values in $ 2d-3$ coordinates.

Our main Lemma is below.  Note that in (\ref{e.3/2}), the assertion
is that the $ 2d-3$ parameters in the definition of $ \Phi _t$
behave, with respect to $ L ^{p}$ norms, as if they are independent.

\begin{lemma}\label{l.Tech} We have these estimates.
\begin{align}\label{e.1}
\norm \Psi .1. & \lesssim 1 \,,
\\  \label{e.2}
\norm \Psi .2.\,,\  \max _{1\le j\le n} \,\norm \Psi _{\neq j}.2.& \lesssim
\operatorname e ^{a ' q ^{2b}}\,,
\\ \label{e.3/2}
\norm \Phi _t .p. &\lesssim  p ^{d-1/2}  n ^{d-3/2} q ^{-1/2}\,,
\qquad 2<p<\infty \,.
\end{align}
In (\ref{e.2}), the value of $ a'$ is a decreasing function of $ 0<a<1$.
\end{lemma}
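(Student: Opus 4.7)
The plan is to prove the three estimates in the order \eqref{e.3/2}, \eqref{e.2}, \eqref{e.1}. Estimate \eqref{e.3/2} is a pure Littlewood--Paley bound on the off-diagonal part $\Phi_t$ of $F_t^2$; it feeds into \eqref{e.2} via the decomposition $F_t^2 = |\mathbb{A}_t| + \Phi_t + \Xi_t$ (where $\Xi_t$ collects pairs with $r_1 \neq s_1$), which converts $(1 + \widetilde\rho F_t)^2$ into a constant piece plus mean-zero fluctuations. Finally, \eqref{e.1} will use \eqref{e.2} via a Cauchy--Schwarz duality against the set where $\Psi$ is negative.

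For \eqref{e.3/2} the key observation is that whenever $r_1 = s_1$, the first-coordinate Haar factor $h_{R_1} h_{S_1}$ vanishes unless $R_1 = S_1$, in which case $h_{R_1}^2 = \mathbf{1}_{R_1}$; this is the simplification the authors flag in the paragraph after Theorem~\ref{t.d=3}. Thus $\Phi_t$ reduces to a sum, indexed by $r_1 \in I_t$ and $R_1$ with $|R_1| = 2^{-r_1}$, of $\mathbf{1}_{R_1}(x_1)$ times a product of Haar functions in the remaining $d - 1$ coordinates, with the pair $(\vec r', \vec s') \in \mathbb{H}_{n-r_1}^{d-1}$ ranging over distinct pairs and carrying $2d - 3$ free dyadic scales. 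I will iterate the Hilbert-space-valued Littlewood--Paley inequality~\eqref{e.lp} in each of the $d - 1$ non-first coordinates, applied to both Haar copies, accumulating a total factor of $p^{d - 1/2}$ and reducing $\|\Phi_t\|_p$ to a square-function norm; pair-counting ($\approx n^{2d-3}/q$ pair-sets, with the $1/q$ coming from $|I_t| = n/q$) and orthogonality then deliver the factor $n^{d-3/2}/\sqrt{q}$.

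For \eqref{e.2}, set $C_t = 1 + \widetilde\rho^2 |\mathbb{A}_t|$ and $G_t = 2\widetilde\rho F_t + \widetilde\rho^2(\Phi_t + \Xi_t)$; the identity $f_{\vec r}^2 \equiv 1$ gives $(1 + \widetilde\rho F_t)^2 = C_t + G_t$, and all three of $F_t, \Phi_t, \Xi_t$ have mean zero (by Haar orthogonality at distinct scales in some coordinate), so $\int G_t = 0$. Expanding $\Psi^2 = \prod_t (C_t + G_t)$, the main term satisfies $\prod_t C_t \leq \exp(\widetilde\rho^2 |\mathbb{H}_n|) \lesssim \exp(a^2 q^{2b})$, matching the claim, while the cross terms $\int \prod_{t \in A} G_t$ with $|A| \geq 2$ vanish for most $F_t F_s$ and $\Xi_t \Xi_s$ pairings by orthogonality and are controlled for the $\Phi_t \Phi_s$ pairings using \eqref{e.3/2}. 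The same argument with one factor removed yields the bound on $\|\Psi_{\neq j}\|_2$. For \eqref{e.1}, iterated orthogonality in $x_1$ (using that the scales $r_{t,1} \in I_t$ are distinct across distinct $t$) gives $\int \prod_{t \in A} F_t = 0$ for every $|A| \geq 1$, so $\int \Psi = 1$ and $\|\Psi\|_1 = 1 + 2\int \Psi_-$. I will then bound $\int \Psi_- \leq \|\Psi\|_2 \cdot |\{\Psi < 0\}|^{1/2}$ by Cauchy--Schwarz, using $\{\Psi < 0\} \subseteq \bigcup_t \{|F_t| > 1/\widetilde\rho\}$ controlled via the restricted-sum Littlewood--Paley bound $\|F_t\|_p \lesssim p^{(d-1)/2} n^{(d-1)/2}/\sqrt{q}$ combined with Chebyshev.

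The hard part will be this last step. The Chebyshev--Littlewood--Paley tail decays like $\exp(-c_a q^{1/(2(d-1))})$, whereas \eqref{e.2} grows like $\exp(a' q^{2b}) = \exp(a' \sqrt q)$ with $b = \tfrac14$, and for $d \geq 3$ the former does not manifestly dominate the latter. I therefore anticipate needing either a sharper sub-Gaussian tail bound for $F_t$ (for example via a Chang--Wilson--Wolff-type estimate exploiting the dyadic martingale structure) or a direct decomposition of $\Psi$ into a truncated positive part of $L^1$-norm close to $1$ plus a small $L^1$ remainder, in order to make the constants line up. The bookkeeping of cross terms in \eqref{e.2} is also delicate, though more routine.
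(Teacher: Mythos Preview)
Your diagnosis of the difficulty in \eqref{e.1} is exactly right, and it is in fact the main gap in your proposal. The Littlewood--Paley bound $\|\rho F_t\|_p \lesssim p^{(d-1)/2}$ only yields a tail of order $\exp(-c\,q^{1/(2(d-1))})$, which for $d\ge3$ cannot absorb the $\exp(a'q^{1/2})$ from \eqref{e.2}. The missing idea is that the Beck Gain \eqref{e.3/2} itself upgrades the Littlewood--Paley estimate on $F_t$ to a genuine subgaussian bound. Apply \eqref{e.lp} \emph{only in the first coordinate}: since distinct $\vec r\in\mathbb A_t$ with $r_1=s$ fixed contribute to a single Haar scale in $x_1$, one obtains
\[
\|\rho F_t\|_p \lesssim \sqrt p\,\Bigl\|\,1+\rho^2\Phi_t\,\Bigr\|_{p/2}^{1/2}
\lesssim \sqrt p\,\bigl(1+\|\rho^2\Phi_t\|_{p/2}^{1/2}\bigr),
\]
because the diagonal of the square function collapses to $1$. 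Now \eqref{e.3/2} gives $\|\rho^2\Phi_t\|_p\lesssim p^{d-1/2}q^{1/2}n^{-1/2}$, which is $\le1$ for $p\lesssim (n/q)^{1/(2d-1)}$; in that range $\|\rho F_t\|_p\lesssim\sqrt p$, hence $\mathbb P(|\rho F_t|>x)\lesssim\exp(-cx^2)$ for $x\lesssim (n/q)^{1/(4d-2)}$. With $x=a^{-1}q^{1/2-b}$ and $b=\tfrac14$ this gives $\mathbb P(\Psi<0)\lesssim q\exp(-ca^{-2}q^{1/2})$, which now beats $\|\Psi\|_2\lesssim\exp(a'q^{1/2})$ for $a$ small. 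This is precisely the ``sharper sub-Gaussian tail'' you anticipated needing; the point is that it comes directly from \eqref{e.3/2}, not from an outside martingale inequality. The requirement that $a^{-1}q^{1/4}$ lie in the subgaussian range is what forces $\varepsilon<1/(2d)$.

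Your approach to \eqref{e.2} via the full expansion $\Psi^2=\prod_t(C_t+G_t)$ is also different from the paper's and, as sketched, incomplete: the cross terms $\int\prod_{t\in A}G_t$ do \emph{not} mostly vanish (for instance $\int F_t\Phi_s$ need not be zero when $t<s$, since $\Phi_s$ carries indicator functions, not Haars, in $x_1$), and there are $2^q$ subsets to control. The paper instead conditions on the $\sigma$-field $\mathcal F$ generated by $F_1,\dots,F_{q-1}$ in the first coordinate: since the $x_1$-scales of $F_q$ lie in $I_q$ and are strictly finer than those in $\mathcal F$, one gets $\mathbb E[(1+\widetilde\rho F_q)^2\mid\mathcal F]=1+a^2q^{2b-1}+\widetilde\rho^2\Phi_q$, with the $\Xi_q$ piece disappearing. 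This sets up a clean one-step recursion $N(V+1;2)^2\le(1+a^2q^{2b-1})N(V;2)^2+\text{(error involving }\Phi_V\text{)}$, and the error is handled by \eqref{e.3/2} together with a crude $L^4$ bound and H\"older interpolation. Iterating $q$ times gives \eqref{e.2} without any combinatorial expansion.
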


The proof of this Lemma is taken up  next section. Assuming the
Lemma, we proceed as follows. An important simplification in the
Signed Small Ball Inequality comes from the equalities
\begin{equation} \label{e.main}  
\begin{split}
\ip F_j, \Psi, &= \ip \sum_{\vec r \in \mathbb A_j} f_r, \Psi ,
\\
&= \sum_{\vec r \in
\mathbb A_j} \ip f_r , (1+ \widetilde \rho F_j) \Psi _{\neq j} ,
\\
&= \widetilde \rho \sum_{\vec r \in \mathbb A_j} \ip  f_r^2 ,
\Psi_{\neq j}, + \widetilde \rho \ip \Phi_j  ,  \Psi _{\neq j} ,
\\
&= \widetilde \rho \, \sharp \mathbb A_j + \widetilde \rho \ip \Phi
_j, \Psi _{\neq j} , \, .
\end{split}
\end{equation}
We have used the fact that there has to be a coincidence in the
first coordinate in order for the product of $\mathsf r$ functions
to have non-zero integral. The first term in the third line is the
`diagonal' term, while the second term arises from different vectors
which coincide in the first coordinate. Therefore, we can estimate
\begin{align*}
\norm H. \infty . & \gtrsim
\ip H, \Psi ,
\\
&=\sum _{j=1} ^{q} \ip F_j, \Psi ,
\\
&= \widetilde \rho \, \sharp  \mathbb H_n + \sum _{j=1} ^{q}
\widetilde \rho \ip \Phi _{j}, \Psi _{\neq j},
\end{align*}

\smallskip

It is clear that
\begin{equation} \label{e.PE}
 \widetilde \rho \, \sharp  \mathbb H_n \simeq  a^{5/4} n^{\frac{d-1}{2}+\frac{\varepsilon}{4}}=a
^{5/4}  n ^{\eta (d) }\,,
\end{equation}
which is our principal estimate. The other term we treat as an error term.
Using H\"older's inequality, and (\ref{e.1}) and (\ref{e.2}) we see that
\begin{equation*}
\norm \Psi _{\neq j}. (q ^{2b})'.  \le
  \norm \Psi  _{\neq j}. 1. ^{(q^{2b}-2)/q ^{2b}}
  \norm \Psi _{\neq j}.2. ^{ 2q ^{-2b}}
  \lesssim 1\,.
\end{equation*}
Therefore, we can estimate as below, where we use the estimate above and
(\ref{e.3/2}).
\begin{align*}
\ABs{\sum _{j=1} ^{q} \widetilde \rho \ip \Phi _{j}, \Psi _{\neq j},
} & \lesssim \sum _{j=1} ^{q} \widetilde \rho  \norm \Phi _j. q
^{2b}. \norm \Psi _{\neq j}.  (q ^{2b})'.
 \\&\lesssim  q  \cdot  \frac {a q ^{b}} {n ^{(d-1)/2}}\cdot q ^{2b(d-1/2)}  n^{d-3/2} \cdot q ^{-1/2}  \\&
 \simeq a q ^{2b d + 1/2} n ^{(d-2)/2}
 \ll   n ^{\eta (d)}\,.
\end{align*}
 This term will
be smaller than the term in (\ref{e.PE}).  The proof of our main
result is complete, modulo the proof of Lemma~\ref{l.Tech}.

\section{The Analysis of the Coincidence and Corollaries of the Beck Gain} 

Following the language of J.~Beck \cite{MR1032337}, a \emph{coincidence} occurs if we have two
vectors $ \vec r\neq \vec s$ with e.\thinspace g.~$ r_1=s_1$,
precisely the condition that we imposed in the definition of $ \Phi _t$, (\ref{e.Phi}).
He observed that sums over
products of $ \mathsf r$ functions in which there are coincidences obey favorable
$ L^2$ estimates.  We refer to (extensions of) this observation as the \emph{Beck Gain.}

\begin{bg}\label{l.SimpleCoincie}
We have the estimates below, valid for an absolute implied constant
that is only a function of dimension $ d \ge 3$.
\begin{equation}\label{e.BG}
\sup _{1\le j\le n}  \norm \Phi _{j}.p. \lesssim   p ^{d-1/2} \cdot
n ^{d-3/2} q ^{-1/2}\,, \qquad  1\le p < \infty \,.
\end{equation}
\end{bg}

This Lemma, in dimension $ d=3$ appears in \cite{bl}.  The proof in
higher dimensions, which was given in \cite{blv}, is inductive. We
omit the proof here as it is rather lengthy and refer the reader to
\cite{blv} for the details. The estimate in the aforementioned papers
does not strictly speaking contain Lemma \ref{l.SimpleCoincie}, as
it does not include $q^{-1/2}$. However, this can be easily fixed in
the proof due to the fact that the value of the first coordinate can
be chosen in $n/q$ ways rather than $n$. We also emphasize that the
estimate above may admit an improvement, in that the power of $ p$
is perhaps too large by a single power.

\begin{conjecture}\label{j.bg}
We have the estimates below, valid for an absolute implied constant
that is only a function of dimension $ d \ge 3$.
\begin{equation}\label{e.BG}
\sup _{1\le j\le n}  \norm \Phi _{j}.p. \lesssim   (pn) ^{d-3/2} q
^{-1/2}\,, \qquad  1\le p < \infty \,.
\end{equation}
\end{conjecture}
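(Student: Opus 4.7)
The target exponent $p^{d-3/2}$ matches the number of free parameters, $2d-3$, in the pair $(\vec r,\vec s)$ appearing in $\Phi_j$ (the vectors $\vec r$ and $\vec s$ have $d-1$ and $d-2$ free entries respectively, because of the coincidence $r_1=s_1$ and the total-sum constraint), and so an optimal proof ought to spend one factor of $\sqrt p$ per free parameter. The current Lemma~\ref{l.SimpleCoincie} gives $p^{d-1/2}$, exceeding this budget by one $\sqrt p$, and the goal is to recover the missing factor.

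My plan is to induct on $d$, retaining the algebraic reduction already used for the Beck Gain. The core identity, forced by $r_1=s_1$, is the dyadic collapse $h_{R_1}(x_1)h_{R_1'}(x_1)=\mathbf{1}_{R_1=R_1'}\mathbf{1}_{R_1}(x_1)$, which reduces $\Phi_j$ to
\begin{equation*}
\Phi_j = \sum_{k\in I_j}\sum_{|R_1|=2^{-k}}\mathbf{1}_{R_1}(x_1)\bigl(F_{k,R_1}^2-N_k\bigr),
\end{equation*}
where $F_{k,R_1}$ is a hyperbolic sum of $\mathsf r$-functions in the $d-1$ remaining variables, of total length $n-k$, and $N_k=\|F_{k,R_1}\|_2^2$ is the number of admissible tails. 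The fact that $k$ ranges only over the interval $I_j$ of length $n/q$, together with the disjointness of supports in $x_1$ across different $R_1$ at fixed $k$, is what ultimately yields the $q^{-1/2}$ gain.

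The inductive step would expand the centered square into cross terms $g_{\vec r,R_1}g_{\vec s,R_1}$ indexed by $(r_2,\dots,r_d)\neq(s_2,\dots,s_d)$, and split by whether these $(d-1)$-dimensional tails share a further coincidence. If yes, apply the inductive hypothesis in dimension $d-1$. If no, the product tensorizes into Haar sums at distinct scales in every coordinate $j\ge 2$, and iterated Hilbert-space-valued Littlewood--Paley should deliver one $\sqrt p$ per coordinate. The base case $d=3$ is immediate, since two distinct $(r_2,r_3)$ with equal sum must differ in both entries; only the no-further-coincidence case can occur.

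The main obstacle is eliminating the extra $\sqrt p$. Tracking constants through the argument of \cite{blv}, the overhead appears to arise from an intermediate square-function or Khintchine-type step applied to the $\pm 1$ signs $\alpha(R)$ after the Littlewood--Paley inequalities have already extracted a $\sqrt p$ from each scale. Removing the redundancy would require a single coordinated Littlewood--Paley estimate that treats all $2d-3$ scales (and the signs) simultaneously, rather than iterating between $L^2$ orthogonality and vector-valued Littlewood--Paley in a way that effectively charges one coordinate twice. I do not see how to implement this cleanly with the tools surveyed here, which is presumably why the estimate is stated only as an aspirational refinement of Lemma~\ref{l.SimpleCoincie} rather than as a theorem.
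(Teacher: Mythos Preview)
The statement you are asked to prove is labeled a \emph{Conjecture} in the paper, and the paper offers no proof of it whatsoever. The only commentary the paper provides is the sentence preceding it---that the power of $p$ in Lemma~\ref{l.SimpleCoincie} ``is perhaps too large by a single power''---and the sentence following it, noting that the conjecture would improve the main theorem to $\eta(d)<\tfrac{d-1}{2}+\tfrac{1}{8d-8}$. There is therefore nothing to compare your proposal against.

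Your write-up is not a proof and does not claim to be one: you lay out the natural decomposition (collapse in the first coordinate, induction on $d$, split by further coincidence), correctly locate the extra $\sqrt p$ as arising from a redundant square-function step in the iterated argument, and then explicitly concede that you do not know how to remove it. That diagnosis is consistent with the paper's own stance. Your parameter-counting heuristic---$2d-3$ free scales, hence a target of $p^{(2d-3)/2}=p^{d-3/2}$---is exactly the intuition behind the conjecture, and your identification of the base case $d=3$ as the clean one is also correct. But none of this constitutes a proof, and the paper does not supply one either; the estimate remains open.
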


With this conjecture we could prove our main theorem for all $  \eta
(d)< \frac {d-1} 2 + \frac 1 {8d-8}$.

The Beck Gain Lemma \ref{l.SimpleCoincie} has several important
consequences. Theorem~\ref{t.LP} implies an exponential estimate for
sums of $ \mathsf r$ functions. However, with the Beck Gain at hand, we
can derive a subgaussian estimate for such sums, for moderate
deviations.

\begin{theorem}\label{t.better} Using the notation of (\ref{e.rho}) and (\ref{e.G_t}),
we have this estimate, valid for all $ 1\le t\le q$.
\begin{equation}\label{e.better}
\norm  \rho F_t  .p.  \lesssim  \sqrt p \,  \,, \qquad 1\le p \le c
\left(\frac{n}{q} \right)^{\frac {1 } {2d-1}} \,.
\end{equation}
As a consequence, we have the distributional estimate
\begin{equation}\label{e.partialX2}
\mathbb P ( \lvert  \rho F _t\rvert  >x ) \lesssim \operatorname
{exp} (- c x ^2 )\,, \qquad x< c  \left(\frac{n}{q} \right) ^{
\frac {1 } {4d-2}}\,.
\end{equation}
Here $ 0<c<1$ is an absolute constant.
\end{theorem}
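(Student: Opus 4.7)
The plan is to bootstrap the trivial $L^2$ estimate $\norm \rho F_t.2. \lesssim 1$ by combining the one-dimensional Littlewood-Paley inequality in the first coordinate, a clean square-function identity, and the Beck Gain of Lemma~\ref{l.SimpleCoincie}. First I would decompose
\[
F_t(x_1,x') = \sum_{r_1 \in I_t} G_{r_1}(x_1,x'), \qquad G_{r_1} \coloneqq \sum_{\substack{\vec r \in \mathbb A_t \\ r_1 \text{ fixed}}} f_{\vec r}.
\]
For each fixed $x'$, the function $G_{r_1}(\cdot, x')$ is an $L^\infty$-normalized Haar series in $x_1$ at the single scale $2^{-r_1}$, whose coefficients $K_{R_1}(x')$ are $(d-1)$-dimensional hyperbolic sums of $\mathsf r$ functions of length $n-r_1$; consequently $G_{r_1}^2(x_1,x') = \sum_{|R_1| = 2^{-r_1}} K_{R_1}(x')^2 \mathbf 1_{R_1}(x_1)$. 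Applying the scalar one-dimensional Littlewood-Paley inequality \eqref{e.lp} in $x_1$ pointwise in $x'$ and then integrating in $x'$ yields, for $4 \le p < \infty$,
\[
\norm F_t.p.^p \lesssim (\sqrt p)^p \NOrm \sum_{r_1 \in I_t} G_{r_1}^2 .p/2.^{p/2}.
\]

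The crucial observation is the identity
\[
\sum_{r_1 \in I_t} G_{r_1}^2 = \sharp \mathbb A_t + \Phi_t,
\]
which is the one-coordinate analogue of the cancellation in \eqref{e.main}: expanding each square, the diagonal terms $\vec r = \vec s$ collapse to $\sharp \mathbb A_t$ via $f_{\vec r}^2 \equiv 1$, while the off-diagonal terms within each $G_{r_1}$ automatically carry $r_1 = s_1$ and hence reproduce exactly $\Phi_t$ from \eqref{e.Phi}. Together with the uniform bound $\sharp \mathbb A_t \lesssim n^{d-1}/q$ and the Beck Gain \eqref{e.BG}, this gives
\[
\norm F_t.p. \lesssim \sqrt p \bigl( n^{d-1}/q + p^{d-1/2} n^{d-3/2} q^{-1/2} \bigr)^{1/2}.
\]
The second term in the parentheses is dominated by the first precisely when $p^{d-1/2} \lesssim (n/q)^{1/2}$, i.e.\ when $p \lesssim (n/q)^{1/(2d-1)}$; in that range $\norm F_t.p. \lesssim \sqrt p \cdot n^{(d-1)/2} q^{-1/2}$, and multiplication by $\rho = \sqrt q\, n^{-(d-1)/2}$ gives \eqref{e.better} for $p \ge 4$. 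The range $1 \le p \le 4$ is trivial from the $L^2$ bound and monotonicity of $L^p$ norms on $[0,1]^d$.

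For the tail bound \eqref{e.partialX2} I would apply Markov's inequality with the $L^p$ estimate just obtained, $\mathbb P(\abs{\rho F_t} > x) \le (C\sqrt p/x)^p$, and optimize $p \simeq x^2$; the admissibility constraint $p \le c(n/q)^{1/(2d-1)}$ translates directly into $x \le c'(n/q)^{1/(4d-2)}$, yielding the stated subgaussian decay. The hard work lives entirely in the Beck Gain Lemma~\ref{l.SimpleCoincie}, which is imported from \cite{bl,blv}; granted that estimate, the present argument is a short bootstrap, and the identity $\sum G_{r_1}^2 = \sharp \mathbb A_t + \Phi_t$ is what lets the $q^{-1/2}$ gain produced by Beck propagate cleanly through the Littlewood-Paley passage to general $p$.
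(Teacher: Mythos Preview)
Your argument is correct and is essentially identical to the paper's own proof: both apply the one-dimensional Littlewood-Paley inequality in the first coordinate, reduce the square function via the identity $\sum_{r_1\in I_t} G_{r_1}^2 = \sharp\mathbb A_t + \Phi_t$ (the paper writes this, after multiplying by $\rho^2$, as $1+\rho^2\Phi_t$), invoke the Beck Gain \eqref{e.BG} to control $\Phi_t$, and read off the range $p\lesssim (n/q)^{1/(2d-1)}$ from the resulting balance; the tail bound is then the standard $p\simeq x^2$ optimization in Markov's inequality.
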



\begin{proof}

Recall that
\begin{equation*}
F_t =  \sum _{\vec r\in \mathbb A _t}  f _{\vec r}\,.
\end{equation*}
where  $ \mathbb A _t \coloneqq \{\vec r\in \mathbb H _n
\mid r_1\in I_t\}$, and $ I_t$  in an interval of integers of length $ n/q$,
so that $ \sharp \mathbb A _t \simeq   n ^2 /q \simeq \rho ^{-2}$.

Apply the Littlewood-Paley inequality  in the first coordinate.
This results in the estimate
\begin{align*}
\norm \rho F_t .p.
& \lesssim \sqrt p
\NOrm \Bigl[\sum _{s\in I_j} \Abs{ \rho \sum _{\vec r\,:\, r_1=s} f _{\vec r}} ^2
\Bigr] ^{1/2} .p.
\\
& \lesssim  \sqrt p \norm   1+   \rho ^{2}\Phi _{t}   .p/2. ^{1/2}
\\
& \lesssim  \sqrt p \Bigl\{ 1 + \norm \rho ^{2}\Phi _{t}   .p/2.
^{1/2}\Bigr\} \,.
\end{align*}
Here, it is important to use the constants in the Littlewood-Paley
inequalities that give the correct order of growth of $ \sqrt p$. Of
course the terms $ \Phi _{t}$ are controlled by the estimate in
(\ref{e.BG}). In particular, we have
\begin{equation}\label{e.FFFjjj}
\norm \rho ^{2}\Phi _{t}.p. \lesssim  \frac{ q } {n ^{d-1}}  p
^{d-1/2}  n ^{d-3/2}  q ^{-1/2} \lesssim p ^{d-1/2} n ^{-1/2}
q^{1/2}\,.
 \end{equation}
Hence (\ref{e.better}) follows.

\smallskip

The second distributional inequality is a well known consequence of the norm
inequality.  Namely, one has the inequality below, valid for all $ x$:
\begin{equation*}
\mathbb P (\rho F_t >x ) \le C ^{p} p ^{p/2} x ^{-p}\,, \qquad 1\le
p \le c \left(\frac{n}{q} \right) ^{\frac {1 } {2d-1}} \,.
\end{equation*}
If $ x$ is as in (\ref{e.partialX2}), we can take $ p \simeq x ^{2}$ to prove the
claimed exponential squared bound.
 \end{proof}

\begin{proof}[Proof of (\ref{e.1}).]

Observe that
\begin{equation*}
\mathbb P (\Psi <0) \lesssim q \operatorname {exp} (c a ^{-2} q ^{1-2b})\,.
\end{equation*}
Indeed, using (\ref{e.partialX2}), we have
\begin{align*}
\mathbb P (\Psi <0)&\le \sum _{t=1} ^{q} \mathbb P ( \widetilde \rho \,  F_t < -1)
\\
& = \sum _{t=1} ^{q}  \mathbb P (\rho F_t < - a ^{-1} q ^{1/2-b})
\\
& \lesssim q \operatorname {exp} (c a ^{-2} q ^{1-2b})\,.
\end{align*}
Note that to be able to use (\ref{e.partialX2}) we need to have $a
^{-1} q ^{1/2-b} \le c \left(\frac{n}{q} \right) ^{\frac {1 }
{2d-1}}$, which leads to $\varepsilon<\frac{1}{2d}$.  Then, assuming
(\ref{e.2}), we have
\begin{align*}
\norm \Psi .1. & = \mathbb E \Psi -2 \mathbb E \Psi \mathbf 1 _{\Psi <0}
\\
& \le 1+ 2\mathbb P (\Psi <0) ^{1/2} \norm \Psi .2.
\\
& \lesssim 1+ \operatorname {exp} ( - a ^{-2}q ^{1-2b}/2+ a q ^{ 2b})\,.
\end{align*}
For sufficiently small $ 0<a<1$, the proof is finished.  Note that this
last step forces $ b= 1/4$ on us.

\end{proof}

\begin{proof}[Proof of (\ref{e.2}).]
The supremum over $ j$ will be an immediate consequence of the proof below, and
so we don't address it specifically.

Let us give the initial, essential observation.
We expand
\begin{equation*}
\mathbb E \prod _{j=1} ^{q} (1+ \widetilde \rho F_j) ^2
=
\mathbb E \prod _{j=1} ^{q} (1+ 2\widetilde \rho F_j+ (\widetilde \rho F_j) ^2  )\,.
\end{equation*}
Hold the $ x_2$ and $ x_3$ coordinates fixed, and let $ \mathcal F$ be the sigma
field generated by $ F_1 ,\dotsc, F_{q-1}$.  We have
\begin{equation} \label{e.;p}
\begin{split}
\mathbb E \bigl(1+ 2\widetilde \rho F_q+ (\widetilde \rho F_q) ^2
\,\big|\, \mathcal F\bigr) &=1+\mathbb E \bigl((\widetilde \rho F_q) ^2
\,\big|\, \mathcal F\bigr)
\\
&=1+   a ^2  q ^{2b-1}+ \widetilde \rho ^2  \Phi _{q}\,,
 \end{split}
\end{equation}
where $ \Phi _{q}$ is defined in (\ref{e.Phi}).
 Then, we see that
\begin{align} \nonumber
\mathbb E \ \prod _{v=1} ^{q} (1+ 2\widetilde \rho F_t+ (\widetilde
\rho F_t) ^2  ) &= \mathbb E \Bigl\{  \prod _{v=1} ^{q-1} (1+
2\widetilde \rho F_t+ (\widetilde \rho F_t) ^2  )\, \times \mathbb E
\bigl(1+ 2\widetilde \rho F_t+ (\widetilde \rho F_t) ^2 \,\big|\,
\mathcal F \bigr)\Bigr\}
\\
&\le  \label{e.;;}
(1+a ^2 q ^{2b-1})
\mathbb E \prod _{v=1} ^{q-1} (1+ 2\widetilde \rho F_t+ (\widetilde \rho F_t) ^2  )
\\  \label{e.;;;;}
& \qquad + \mathbb E \abs{ \widetilde\rho ^{2} \Phi _{q} } \cdot
\prod _{v=1} ^{q-1} (1+ 2\widetilde \rho F_t+ (\widetilde \rho F_t)
^2  )
\end{align}
This is the main observation: one should induct on (\ref{e.;;}),
while treating the term in (\ref{e.;;;;}) as an error, as the `Beck
Gain' estimate (\ref{e.BG}) applies to it.

Let us set up notation to implement this line of approach.  Set
\begin{equation*}
N (V;r) \coloneqq \NOrm \prod _{v=1} ^{V} (1+ \widetilde \rho F_v)
.r.  \,, \qquad   V=1 ,\dotsc, q\,.
\end{equation*}
We will obtain a very crude estimate for these numbers for $ r=4$.
Fortunately, this is relatively easy for us to obtain. Namely,
$ q$ is small enough that we can use the inequalities (\ref{e.better}) to see that
\begin{align*}
N (V; 4)&\le \prod _{v=1} ^{V} \norm 1+ \widetilde \rho F_v .4V.
\\
& \le ( 1 +  C q ^{b}  ) ^{V}
\\
& \le (Cq) ^{   q}\,.
\end{align*}
For a large choice of $ \tau >1$, which is a function of the choice of
$ \kappa >0$ in (\ref{e.q}), we
have the estimate below from H\"older's inequality
\begin{equation}\label{e.killq^q}
N (V;2(1-1/\tau q) ^{-1} )\le N (V;2) ^{1-2/\tau q} \cdot N (V; 4)
^{2/\tau q}\,.
\end{equation}

We see that (\ref{e.;;}), (\ref{e.;;;;}) and (\ref{e.killq^q})
give us the inequality
\begin{equation}\label{e.==}
\begin{split}
N (V+1; 2) ^2  & \le (1+a ^2 q ^{2b-1})    N (V; 2) ^2 + C  \cdot  N
(V; 2 (1-1/\tau q) ^{-1} ) ^2   \cdot \norm  \widetilde \rho ^{2}
\Phi _{V} .\tau  q.
\\
& \le (1+a ^2 q ^{2b-1})    N (V; 2)^2 + C  N (V;2) ^{2-4/\tau q}
\cdot N (V; 4) ^{4/\tau q} \norm  \widetilde \rho ^{2} \Phi _{V} .
\tau q.
\\
& \le (1+a ^2 q ^{2b-1})   N (V; 2) ^2 + C_ \tau   q ^{d-1/2+4/\tau
} n ^{-1/2} N (V;2) ^{2-2/\tau q} \,.
\end{split}
\end{equation}
In the last line we have used the the inequality (\ref{e.BG})
and the constant $ C _{\tau }$ is only a function of $ \tau >1$, which is fixed.

Of course we only apply this as long as $ N (V; 2)\ge 1$.  Assuming this is true
for all $ V\ge 1$, we see that
\begin{equation*}
N (V+1; 2) ^2 \le (1+a ^2 q ^{2b-1} + C_ \tau   q ^{d-1/2+4/\tau } n
^{-1/2} ) N (V; 2) ^2 \,.
\end{equation*}
And so, by induction,
\begin{align*}
N (q;2)&\lesssim  (1+a ^2  q ^{2b-1}+  C_ \tau   q ^{d-1/2+4/\tau }
n ^{-1/2} ) ^{q/2} \lesssim \operatorname e ^{ 2a q ^{2b}}\,.
\end{align*}
Here, the last inequality will be true for large $ n$, provided
$ \tau $ is much bigger than $ 1/\kappa $.  Indeed, we need
\begin{align*}
a ^2  q ^{2b-1}&\ge   C_ \tau   q ^{d-1/2+4/\tau } n ^{-1/2}
\end{align*}
Or equivalently,
\begin{equation*}
c n ^{1/2}\ge q ^{d+4/\tau }\,.
\end{equation*}
Comparing to the definition of $ q$ in (\ref{e.q}), we see that the proof is finished.

\end{proof}

 \begin{bibsection}
 \begin{biblist}

\bib{MR1032337}{article}{
    author={Beck, J{\'o}zsef},
     title={A two-dimensional van Aardenne-Ehrenfest theorem in
            irregularities of distribution},
   journal={Compositio Math.},
    volume={72},
      date={1989},
    number={3},
     pages={269\ndash 339},
      issn={0010-437X},
    review={MR1032337 (91f:11054)},
}

\bib{MR903025}{book}{
    author={Beck, J{\'o}zsef},
    author={Chen, William W. L.},
     title={Irregularities of distribution},
    series={Cambridge Tracts in Mathematics},
    volume={89},
 publisher={Cambridge University Press},
     place={Cambridge},
      date={1987},
     pages={xiv+294},
      isbn={0-521-30792-9},
    review={MR903025 (88m:11061)},
}

\bib{bl}{article}{
 author={Bilyk, Dmitry},
 author={Lacey, Michael T.},
 title={On the Small  Ball  Inequality in Three Dimensions},
  eprint={arXiv:math.CA/0609815},
  date={2006},
 }

\bib{blv}{article}{
 author={Bilyk, Dmitry},
 author={Lacey, Michael T.},
  author={Vagharshakyan, Armen},
 title={On the Small  Ball  Inequality in All Dimensions},
   date={2007},
 }

\bib{MR1439553}{article}{
    author={Fefferman, R.},
    author={Pipher, J.},
     title={Multiparameter operators and sharp weighted inequalities},
   journal={Amer. J. Math.},
    volume={119},
      date={1997},
    number={2},
     pages={337\ndash 369},
      issn={0002-9327},
    review={MR1439553 (98b:42027)},
}

\bib{MR0319933}{article}{
   author={Schmidt, Wolfgang M.},
   title={Irregularities of distribution. VII},
   journal={Acta Arith.},
   volume={21},
   date={1972},
   pages={45--50},
   issn={0065-1036},
   review={\MR{0319933 (47 \#8474)}},
}

\bib{MR95k:60049}{article}{
    author={Talagrand, Michel},
     title={The small ball problem for the Brownian sheet},
   journal={Ann. Probab.},
    volume={22},
      date={1994},
    number={3},
     pages={1331\ndash 1354},
      issn={0091-1798},
    review={MR 95k:60049},
}

\bib{MR96c:41052}{article}{
    author={Temlyakov, V. N.},
     title={An inequality for trigonometric polynomials and its application
            for estimating the entropy numbers},
   journal={J. Complexity},
    volume={11},
      date={1995},
    number={2},
     pages={293\ndash 307},
      issn={0885-064X},
    review={MR 96c:41052},
}

\bib{MR1005898}{article}{
   author={Temlyakov, V. N.},
   title={Approximation of functions with bounded mixed derivative},
   journal={Proc. Steklov Inst. Math.},
   date={1989},
   number={1(178)},
   pages={vi+121},
   issn={0081-5438},
   review={\MR{1005898 (90e:00007)}},
}

\bib{MR1018577}{article}{
   author={Wang, Gang},
   title={Sharp square-function inequalities for conditionally symmetric
   martingales},
   journal={Trans. Amer. Math. Soc.},
   volume={328},
   date={1991},
   number={1},
   pages={393--419},
   issn={0002-9947},
   review={\MR{1018577 (92c:60067)}},
}

  \end{biblist}
 \end{bibsection}

 \end{document}